\theoremstyle{plain}
\newtheorem{thm}[subsubsection]{Theorem}
\newtheorem{prop}[subsubsection]{Proposition}
\newtheorem{cor}[subsubsection]{Corollary}
\theoremstyle{definition}
\theoremstyle{remark}
\theoremstyle{definition}
\numberwithin{equation}{subsubsection}
\def\cM{\mathcal{M}}
\def\11{\mathbf{1}}
\def\AA{\mathbf{A}}
\def\ZZ{\mathbf{Z}}
\def\id{\mathrm{id}}
\def\Tr{\mathrm{Tr}}
\newcommand{\const}[1]{\mathbb{1}_{#1}}
\title{On Euler-Poincar\'e characteristics}
\author{R. Virk}
\email{rsvirk@gmail.com}
\address{The Appalachians}
\begin{document}
\maketitle
%\setcounter{tocdepth}{1}
%\tableofcontents
%\renewcommand{\thesubsection}{\textbf{\arabic{section}.\arabic{subsection}}}
\renewcommand{\thesubsection}{\arabic{subsection}}
\subsection{Introduction}
It is a remarkable fact about algebraic varieties that the Euler characteristic of the cohomology of a complex algebraic variety coincides with the Euler characteristic of the cohomology with compact support. Using Whitney stratifications, one may extend this to cohomology with coefficients in any algebraically constructible sheaf.
%\footnote{
%The heart of the matter is that the link of a point in a complex variety has zero Euler characteristic.
%}
These statements cry out for a generalization to the relative setting. This, and a bit more, is what this note aims to do.

The main result is Theorem \ref{main}; some applications are discussed in \S\ref{s4}-\S\ref{s6}. Theorem \ref{main} is the topological analogue of an old result of G. Laumon in $\ell$-adic cohomology \cite{L}. The argument presented here uses the same local to global technique as \cite{L}. Thus, all the main ideas are due to Laumon. After this note was written J. Sch\"urmann informed me that the non-mixed case of Theorem \ref{main} is also covered in \cite[\S6.0.6]{Sc} (with a similar proof). 

In Theorem \ref{maineq} we extend Theorem \ref{main} to quotient stacks (i.e., to the setting of equivariant cohomology). In the setting of $\ell$-adic cohomology, and for the special case of quotients by finite groups, this is due to I. Illusie and W. Zheng \cite[Theorem 1.3]{IZ}.

\subsection{The main result}
For a complex variety $X$, let $DX$ be either:
\begin{enumerate}
\item $D^b(\mathrm{MHM}(X))$ - M. Saito's bounded derived category of mixed Hodge modules on $X$;
\item $D^b_c(X_{\mathrm{an}})$ - the bounded derived category of algebraically constructible sheaves of $A$-modules on the complex analytic site associated to $X$. Here $A$ is a commutative ring (with $1$) of finite global dimension.
\end{enumerate}
(i) will be referred to as the mixed case, (ii) as the non-mixed case.
%Note that $DX$ is equipped with a monoidal structure.

For $\cM\in DX$, let $[\cM]$ denote the class of $\cM$ in the Grothendieck ring of $DX$. Let $(1)$ denote Tate twist in the mixed case, or the identity functor in the non-mixed case.
Write $\widetilde{K}X$ for the quotient of the Grothendieck ring of $DX$ by the ideal generated by elements of the form $[\cM(1)]-[\cM]$.

Let $f\colon X\to Y$ be a morphism of varieties. Associated to $f$ are the derived functors $f_*,f_!\colon DX \to DY$. These induce group morphisms $f_*,f_!\colon \widetilde KX \to \widetilde KY$.
\begin{thm}\label{main}The maps
$f_*, f_!\colon \widetilde KX \to \widetilde KY$
coincide.
\end{thm}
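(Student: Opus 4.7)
The plan is to follow Laumon's local-to-global strategy, indicated in the introduction: factor $f$ through a compactification, apply excision, resolve to normal crossings, and finish with a local nearby-cycles calculation at the boundary. First, by Nagata (or by compactifying inside $Y \times \PP^N$), I factor $f = \bar f \circ j$ with $j \colon X \hookrightarrow \bar X$ an open immersion and $\bar f \colon \bar X \to Y$ proper. Since $\bar f_\ast = \bar f_!$ as functors, the conclusion for $\bar f$ is automatic, so it suffices to show $[j_\ast \cM] = [j_! \cM]$ in $\widetilde K \bar X$ for every $\cM \in DX$.

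Let $i \colon Z \hookrightarrow \bar X$ be the closed complement of $X$. The triangle $j_! \to \id \to i_\ast i^\ast$ applied to $j_\ast \cM$ gives $[j_\ast \cM] - [j_! \cM] = [i^\ast j_\ast \cM]$ (using full faithfulness of $i_\ast$); so what must be shown is $[i^\ast j_\ast \cM] = 0$ in $\widetilde K Z$. By Hironaka, pick a proper birational $\pi \colon \tilde X \to \bar X$ that is an isomorphism over $X$ and with $\pi^{-1}(Z)$ simple normal crossings. Since $\pi_\ast = \pi_!$ and proper base change identifies $i^\ast j_\ast \cM$ with the pushforward along $\pi|_{\pi^{-1}(Z)}$ of the corresponding object on $\pi^{-1}(Z)$, it suffices to handle the case where $Z \subset \bar X$ is itself SNC.

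The problem is now local on $\bar X$ along $Z$. Decomposing $j_\ast$ as an iteration of single-hypersurface extensions and inducting on the number of components of $Z$ through a point, I reduce to the case of a single smooth hypersurface $Z = \{g = 0\}$. There, Saito's nearby cycles (Deligne's in the non-mixed case) give a distinguished triangle linking $i^\ast j_\ast \cM$ with $\psi_g^{\mathrm{un}} \cM$ and $\psi_g^{\mathrm{un}} \cM(-1)$ joined by the logarithm-of-monodromy map $N$; in the non-mixed case this becomes $T - 1 \colon \psi_g \cM \to \psi_g \cM$, with no Tate twist. In either case $[i^\ast j_\ast \cM]$ is a combination of classes that differ only by Tate twists, so it vanishes in $\widetilde K Z$. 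The substantive work is concentrated in this last step: pinning down the correct nearby-cycles triangle in Saito's formalism and tracking the Tate twist on the variation map is where the Tate-twist ideal in $\widetilde K$ becomes essential; the compactification, excision, and resolution steps are essentially formal.
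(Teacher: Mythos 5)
Your argument is correct and follows the same overall architecture as the paper --- Nagata compactification, the excision triangle $j_!\to j_*\to i_*i^*j_*$, reduction to a divisorial boundary, and the unipotent nearby-cycles triangle $\psi\to\psi(-1)\to i^*j_*$ at the end --- but your middle reduction is genuinely different. The paper never invokes resolution of singularities: it blows up along $Z$ (an arbitrary closed subset) to make the boundary an effective Cartier divisor, and then uses the explicit \v Cech/inclusion--exclusion identity
\[ i^*j_* \;=\; \sum_{I\neq\varnothing}(-1)^{|I|-1}\, z_{I*}\,i_I^*\,j_{I*}\,u_I^* \]
in the Grothendieck group to reduce to a globally principal divisor. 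You instead apply Hironaka to reach a simple normal crossings boundary and induct on components; this works (the composition of open immersions reduces to each factor exactly as in your first paragraph), but it is a much heavier input, and it would not transport to the other sheaf settings the author flags at the end of the proof (p-adic, motivic, etc.), where resolution may be unavailable while blow-ups always exist. One step you should make precise: the assertion that ``the problem is now local on $\bar X$ along $Z$'' is exactly the \v Cech identity above, and you still need it on your route, since a single smooth hypersurface need not be globally principal and the vanishing of the class $[i^*j_*\cM]$ in $\widetilde K Z$ is not a priori local without that decomposition. Your final nearby-cycles step agrees with the paper's: $[i^*j_*\cM]=[\psi^{\mathrm{un}}\cM(-1)]-[\psi^{\mathrm{un}}\cM]$, which vanishes by definition of the Tate-twist ideal.
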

\begin{proof}
Using Nagata compactification we may factor $f$ as $f = p\circ j$ with $p$ proper, and $j$ an open immersion. Thus, it suffices to only consider open immersions. 

Let $j\colon U\hookrightarrow X$ be an open immersion, and $i\colon Z \hookrightarrow X$ the inclusion of the closed complement $Z:= X - U$. The distinguished triangle
$j_!j^* \to \id \to i_*i^*$ applied
to $j_*$ yields the triangle
$j_! \to j_* \to i_*i^*j_*$. %\footnote{
%A convenient summary of the functors, distinguished triangles, and properties used here may be found in \cite[\S1.2]{BBD}.
%}
So it suffices to show $i^*j_*=0$ in $\widetilde KZ$.

Let $\mathrm{Bl}_ZX$ be the blow up of $X$ along $Z$. Then we have a commutative diagram, with Cartesian squares and proper vertical arrows,
\[\xymatrix{ \mathrm{Bl}_Z X\times_X Z \ar[d]_-{\tilde\pi}\ar[r]^-{\tilde i} & \mathrm{Bl}_Z X \ar[d]_-{\pi}&\ar[l]_-{\tilde j}U\ar@{=}[d] \\ 
Z \ar[r]^-{i} & X & \ar[l]_-{j}U
\ }\]
By proper base change, $i^*j_* = i^*\pi_*\tilde j_* = \tilde\pi_* \tilde i^*\tilde j_*$.
Hence, we may assume $Z$ is an effective Cartier divisor.

Let $\{U_1, \ldots, U_n\}$ be a Zariski open cover of $X$ such that, for each $i$, $Z\cap U_i$ is a principal divisor in $U_i$. For each subset $I\subseteq \{1, \ldots, n\}$,
set $V_I := \bigcap_{i\in I} V_i$, $U_I := U\cap V_I$, and $Z_I:= Z\cap V_I$. Let $u_I\colon U_I\hookrightarrow U$, $j_I\colon U_I\hookrightarrow X$, $z_I\colon Z_I\hookrightarrow Z$, and $i_I\colon Z_I\hookrightarrow X$ be the evident inclusions.
Then the usual argument, \`a la \v Cech, yields
\[ i^*j_* = \sum_{\substack{I\subseteq \{1, \ldots, n\}, \\ I\neq \varnothing}} (-1)^{|I|-1} z_{I*}i_I^*j_{I*}u_I^* \]
in the Grothendieck group. Hence, we may further assume that $Z$ is principal.

Let $f\colon X \to \AA^1$ be a defining equation for $Z$. 
Then we have a distinguished triangle
%(see Appendix \ref{appA}):
$\psi_f \to \psi_f(-1) \to i^*j_*$,
where 
$\psi_f\colon DU \to DX$
is the (unipotent part of the) nearby cycles functor\footnote{
The functor $\psi_f$ is shifted from the usual nearby cycles by $[-1]$ so as to preserve the perverse t-structure. This is purely personal preference, and irrelevant to the discussion at hand.}
 associated to $f$, and
$(-1)$ is the inverse of $(1)$.
\end{proof}

Clearly, the proof of Theorem \ref{main} holds in any `sheaf setting' admitting a partial yoga of the six functors and an appropriate formalism of nearby cycles. Possible candidates (other than those under consideration): holonomic (not necessarily regular) D-modules, twistor D-modules, p-adic cohomology, motivic sheaves, etc. 
%
%\subsection{Hodge polynomials}\label{s3}
%Mixed Hodge modules yield functorial mixed Hodge structures on the (rational) cohomology $H^*(X)$ as well as the cohomology with compact support $H^*_c(X)$.
%Set
%\[
%h^{i,j;k} := \dim \, \Gr^i_F \Gr_{i+j}^W H^k(X) \quad\mbox{and}\quad
%h^{i,j;k}_c := \dim \, \Gr^i_F \Gr_{i+j}^W H^k_c(X),
%\]
%where $\Gr_F$ (resp. $\Gr^W$) denotes the associated graded of the Hodge (resp. weight filtration). Define polynomials
%\[
%P(X; u,v) := \sum_{i,j,k}(-1)^k h^{i,j;k} u^iv^j \quad\mbox{and}\quad
%E(X; u,v) := \sum_{i,j,k} (-1)^k h_c^{i,j;k} u^iv^j.
%\]
%\begin{cor}\label{curious}
%$P(X; t, t^{-1}) = E(X; t, t^{-1})$.
%\end{cor}
%Of course, this is only interesting when $X$ is neither smooth nor compact. It is also sans content if all the relevant Hodge structures are of Tate type (i.e., only of type $(p,p)$).
%
\subsection{Exceptional pullback}\label{s4}
To a morphism of varieties $f\colon X\to Y$ we may also associate the triangulated functors $f^*,f^!\colon DY \to DX$. Denote the induced group morphisms $\widetilde KY \to \widetilde KX$ by $f^*$ and $f^!$ also.
\begin{prop}The maps $f^*,f^!\colon \widetilde KY \to \widetilde KX$ coincide.
\end{prop}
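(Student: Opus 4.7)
The plan is to reduce the statement to two cases in which $f^{*}=f^{!}$ can be checked directly: smooth morphisms and closed immersions. First I would factor $f\colon X \to Y$ as a closed immersion followed by a smooth morphism. When $X$ is quasi-projective a locally closed embedding $X \hookrightarrow \PP^{N}$ yields the factorization
\[ X \hookrightarrow \PP^{N}\times Y \to Y, \]
the second arrow being the smooth projection. In greater generality the factorization exists only locally on $X$, and the globalization proceeds via the same \v{C}ech recipe used in the proof of Theorem \ref{main}.

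For $p\colon E \to Y$ smooth of relative dimension $d$, there is a canonical isomorphism $p^{!}\cong p^{*}(d)[2d]$. Passing to $\widetilde{K}Y$, the even shift contributes $(-1)^{2d}=1$ and the Tate twist becomes the identity, so $p^{!}=p^{*}$ as maps $\widetilde{K}Y\to \widetilde{K}E$.

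Now let $i\colon Z\hookrightarrow X$ be a closed immersion with open complement $j\colon U\hookrightarrow X$. The two localization triangles
\[ j_{!}j^{*}\to \id\to i_{*}i^{*},\qquad i_{*}i^{!}\to \id\to j_{*}j^{*}, \]
give, in the Grothendieck ring, the identities $i_{*}i^{*}=\id-j_{!}j^{*}$ and $i_{*}i^{!}=\id-j_{*}j^{*}$. By Theorem \ref{main}, $j_{!}=j_{*}$ on $\widetilde{K}U$, so $j_{!}j^{*}=j_{*}j^{*}$; subtracting yields $i_{*}(i^{!}-i^{*})=0$ in $\widetilde{K}X$. Since $i^{*}i_{*}=\id$ on $DZ$, the map $i_{*}$ is injective on Grothendieck groups, hence $i^{!}=i^{*}$ on $\widetilde{K}Z$. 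Combining the two special cases proves the proposition.

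The only nontrivial point I anticipate is the globalization of the closed-immersion-followed-by-smooth factorization in the non-quasi-projective setting; the remaining ingredients---the formula $p^{!}\cong p^{*}(d)[2d]$ for smooth $p$, the two localization triangles, and the identity $i^{*}i_{*}=\id$---are standard in both the mixed Hodge module and constructible settings.
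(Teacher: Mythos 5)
Your proof is correct, but it routes the general case differently from the paper. Your treatment of closed immersions (subtracting the two localization triangles and using that $i^*i_*=\id$ makes $i_*$ split injective on Grothendieck groups) is a minor variant of the paper's argument, which instead applies $i^*$ to the triangle $i_*i^!\to\id\to j_*j^*$ and quotes $i^*j_*=i^*j_!=0$. The real divergence is in how you reduce to these two cases: you factor $f$ as a closed immersion into something smooth over $Y$, which works globally only for quasi-projective $X$ (indeed, a general variety need not embed in anything smooth), and you then patch with a \v{C}ech argument. That patch does work --- each affine open $U_I$ of $X$ closes up in $\AA^{N}\times Y$, and the identity $\id=\sum_{I\neq\varnothing}(-1)^{|I|-1}u_{I!}u_I^*$ in $KX$, combined with $u_I^*f^!=(fu_I)^!$ (valid since $u_I^*=u_I^!$ for open immersions) and $u_{I!}=u_{I*}$ from Theorem \ref{main}, assembles the local identities into the global one --- but you should spell this out, as it is the one step you leave as a gesture. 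The paper sidesteps quasi-projectivity entirely: it uses the graph factorization $X\hookrightarrow X\times Y\to Y$, which exists for any $X$ but whose second arrow is smooth only when $X$ is smooth, and then handles general $X$ by stratifying into smooth locally closed strata $Z_i$ and writing $f^!=\sum j_{i*}f_i^!=\sum j_{i*}f_i^*=\sum j_{i!}f_i^*=f^*$. The two globalizations are of comparable difficulty; yours buys the slightly more general statement that $p^!=p^*$ in $\widetilde K$ for an arbitrary smooth morphism $p$ (via $p^!\cong p^*(d)[2d]$), while the paper's stratification avoids any discussion of embeddings and uses only resources already deployed in Theorem \ref{main}.
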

\begin{proof}
First, we handle the case of a closed immersion.
Let $i\colon D \hookrightarrow Z$ be a closed immersion and $j\colon U \hookrightarrow Z$ the complementary open immersion. Apply $i^*$ to the distinguished triangle $i_*i^!\to \id \to j_*j^*$ to get the triangle $i^! \to i^* \to i^*j_*j^*$. By Theorem \ref{main}, $i^*j_* = i^*j_! = 0$. So $i^! = i^*$.

Now suppose $f\colon X\to Y$ is a morphism with $X$ smooth. Via its graph, $f$ factors as a closed immersion $X \hookrightarrow X\times Y$ followed by the projection $p\colon X\times Y\to Y$. As $X$ is smooth, $p^! = p^*$ in $\widetilde K(X\times Y)$. Consequently, $f^!=f^*$.

We now deal with the general case. Stratify $X$ by closed subvarieties 
\[ \varnothing= X_{-1} \subset X_0 \subset \cdots \subset X_n = X \]
such that each $Z_i := X_i - X_{i-1}$ is smooth. Let $j_i \colon Z_i \hookrightarrow X$ be the inclusion, and set $f_i :=f \circ j_i$. Then
\[
f^! = \sum_{i=1}^n j_{i*}f_i^! = \sum_{i=1}^n j_{i*}f_i^* =\sum_{i=1}^n j_{i!}f_i^* 
= f^* \qedhere \]
\end{proof}
\begin{cor}Let $\const{X} \in DX$ be the monoidal unit (i.e., the `constant sheaf') and let $\omega_X\in DX$ be the dualizing object. Then $[\const{X}] = [\omega_X]$ in $\widetilde KX$.
\end{cor}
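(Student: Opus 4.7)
The plan is to recognize this as an immediate corollary of the preceding proposition applied to the structure morphism $a_X \colon X \to \pt$. By definition, the monoidal unit $\const{X}$ is the pullback $a_X^* \const{\pt}$ of the constant sheaf on a point, while the dualizing object is $\omega_X = a_X^! \const{\pt}$. The proposition says $a_X^* = a_X^! \colon \widetilde{K}\pt \to \widetilde{K}X$, so applying both sides to $[\const{\pt}]$ yields $[\const{X}] = [\omega_X]$.

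Thus there is essentially nothing to prove beyond invoking the proposition and recalling the two definitional identifications. The only step that requires any thought is making sure the conventions are consistent: in the mixed case one must check that the unit of $D^b(\MHM(\pt))$ that one pulls back is indeed the trivial Hodge structure $\QQ$ (so that $a_X^* \QQ$ is what is being called $\const{X}$), and that $\omega_X$ is the image of this same unit under $a_X^!$. In the non-mixed case the analogous sanity check concerns the constant sheaf of $A$-modules. Neither check is an obstacle.

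There is no real difficulty here, so nothing qualifies as the hard step; the corollary is simply a clean illustration of why the identification of $f^*$ with $f^!$ on $\widetilde{K}$ is a natural invariant-theoretic statement.
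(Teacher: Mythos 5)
Your argument is correct and is exactly the intended one: the paper states this as an immediate corollary of the proposition that $f^* = f^!$ on $\widetilde{K}$, applied to the structure morphism $X \to \pt$ with $\const{X} = a_X^*\const{\pt}$ and $\omega_X = a_X^!\const{\pt}$. Nothing further is needed.
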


\subsection{Quotient stacks}
Suppose a linear algebraic group $G$ acts on $X$. Let $D_GX$ denote the analogue of $DX$ for the stack $[X/G]$ (for instance, see \cite{BL}).
Associated to a $G$-equivariant morphism $f\colon X\to Y$, one has functors $f^*, f_*, f_!, f^!$ between $D_GX$ and $D_GY$ satisfying the usual formalism.
Write $\widetilde K_GX$ for the Grothendieck ring of $D_GX$ modulo the same ideal as before.
\begin{thm}\label{maineq}The induced maps $f_*,f_!\colon \widetilde K_GX\to \widetilde K_GY$ coincide. Similarly, the induced maps $f^!, f^*\colon \widetilde K_GY \to \widetilde K_GX$ coincide.
\end{thm}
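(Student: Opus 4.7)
The strategy is to reduce Theorem \ref{maineq} to Theorem \ref{main} via a Bernstein--Lunts-style Borel approximation. Since $G$ is a linear algebraic group, one may choose a sequence of $G$-representations $V_N$ with $G$-stable Zariski open subsets $U_N \subseteq V_N$ on which $G$ acts freely and with $\mathrm{codim}_{V_N}(V_N \setminus U_N) > N$. Setting $X_N := (U_N \times X)/G$ and $Y_N := (U_N \times Y)/G$ yields honest varieties to which $f$ descends as a morphism $\bar f_N\colon X_N \to Y_N$ directly accessible to Theorem \ref{main}.

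The key input is that, for $N$ large relative to the cohomological amplitude of the objects involved, the `pull back then descend' construction assigns to every $\cM \in D_GX$ a canonical avatar $\cM_N \in DX_N$, and this assignment intertwines the equivariant functors $f_*, f_!, f^*, f^!$ with their non-equivariant counterparts $\bar f_{N*}, \bar f_{N!}, \bar f_N^*, \bar f_N^!$. Passing to Grothendieck groups, this yields a homomorphism $\widetilde K_GX \to \widetilde KX_N$ that detects all classes representable by objects of amplitude $\leq k$, provided $N \geq N_0(k)$. Theorem \ref{main} applied to $\bar f_N$ gives $[\bar f_{N*}\cM_N] = [\bar f_{N!}\cM_N]$ in $\widetilde K Y_N$, and pulling back via the avatar correspondence yields $[f_*\cM] = [f_!\cM]$ in $\widetilde K_GY$.

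The assertion $f^! = f^*$ then follows by rerunning the argument of the preceding proposition in the equivariant category. For an equivariant closed immersion $i$ with complementary open immersion $j$, the triangle $i_*i^! \to \id \to j_*j^*$ together with the first assertion (which gives $i^*j_* = 0$) yields $i^! = i^*$. For a general equivariant $f$, factor through the equivariant graph embedding $X \hookrightarrow X\times Y$ (with diagonal $G$-action); when $X$ is smooth the projection $p\colon X\times Y \to Y$ is smooth, so $p^! = p^*$ in $\widetilde K_G$ (shift by $2\dim X$ and Tate twist cancel). For general $X$, stratify $X$ by smooth $G$-invariant locally closed pieces, available for any linear algebraic $G$ by Sumihiro-type equivariant resolution, and decompose $f^*$ and $f^!$ into stratum contributions.

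The main obstacle is the compatibility of avatars with the six-functor formalism, together with uniformity in $N$. Since $f_*$ and $f_!$ may enlarge cohomological amplitude, $N$ must be chosen big enough to accommodate $\cM$ together with its images; this is a finite choice for any fixed $\cM$, but hinges on the Bernstein--Lunts equivalence (not merely full faithfulness) holding on the relevant amplitude range, and in the mixed case on the analogous Borel approximation formalism for mixed Hodge modules being in place.
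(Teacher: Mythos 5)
Your overall strategy --- reduce to Theorem \ref{main} by passing to free quotients \`a la Bernstein--Lunts --- is the same as the paper's, but your implementation via the tower of approximations $X_N = (U_N\times X)/G$ differs from the paper's and, as written, has a gap precisely at the step you yourself flag as ``the main obstacle.'' Everything hinges on the claim that the avatar map $\widetilde K_GX \to \widetilde KX_N$ \emph{detects} classes (i.e., is injective on the relevant bounded-amplitude part) for $N\gg 0$; without that, knowing $[\bar f_{N*}\cM_N]=[\bar f_{N!}\cM_N]$ in $\widetilde KY_N$ tells you nothing about $\widetilde K_GY$. You assert this detection property but give no argument for it, and it is not formal: it requires knowing that the comparison functor is (in a suitable range) fully faithful on perverse objects with essential image closed under subquotients, so that distinct simple equivariant perverse sheaves stay distinct and no new relations are introduced in the Grothendieck group. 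There is also a compatibility issue you gloss over: $f_*$ of an equivariant object restricted to the $N$-th approximation agrees with $\bar f_{N*}$ of the restriction only up to boundary terms from $V_N\setminus U_N$, controlled in a range of degrees depending on $N$; this is manageable but must be tracked, and it is exactly the uniformity-in-$N$ bookkeeping your plan defers.

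The paper avoids all of this with a one-shot argument: it produces a \emph{single} smooth equivariant morphism $p\colon Z\to Y$ of relative dimension $d$ with connected fibres, where $Z\to\overline Z$ is a $G$-torsor. Because $p$ is smooth with connected fibres, $p^*[d]$ is fully faithful on the perverse hearts with image closed under subquotients --- no largeness hypothesis needed --- so $p^*\colon\widetilde K_GY\to\widetilde K_GZ$ is injective. Smooth base change then replaces $f$ by its pullback over $Z$, where both source and target carry free $G$-actions, and the equivalences $D_GX\simeq D\overline X$, $D_GY\simeq D\overline Y$ reduce everything to Theorem \ref{main}. If you want to salvage your version, the cleanest fix is to import exactly this lemma (full faithfulness on perverse hearts plus closure under subquotients) for the map $U_N\times Y\to Y$; at that point the $N$-indexed tower is no longer doing any work and you may as well fix one $N$. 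Two smaller points: your appeal to ``Sumihiro-type equivariant resolution'' for the smooth $G$-stable stratification is misdirected --- the singular locus is automatically $G$-stable, so Noetherian induction suffices --- and the paper handles $f^!=f^*$ by the same descent rather than by rerunning the stratification argument equivariantly, though your route for that part is also fine once the first assertion is in place.
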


\begin{proof}We will only demonstrate the assertion for $f_*,f_!$. The proof for $f^*,f^!$ is similar and left to the reader.

Using Steifel varieties, as in \cite[\S3.1]{BL}, one sees that property ($\ast$) of \cite[\S2.2.4]{BL} holds for $Y$ (alternatively, see \cite[Lemme 18.7.5]{LMB}). In particular, we obtain a $G$-torsor $Z \to \overline{Z}$ along with a smooth equivariant morphism $p\colon Z \to Y$ of some relative dimension $d$ with connected fibres.\footnote{
There is a small subtlety here. In general, $\overline{Z}$ (which is constructed as an associated bundle) is only an algebraic space and not necessarily a variety. In the non-mixed setting this is a non-issue. In the mixed setting there are two ways to deal with this. One can either restrict to only quasi-projective varieties, in which case $\overline{Z}$ is a variety. Or, one may observe that as mixed Hodge modules are \'etale local, the formalism of $DX$ and $D_GX$ extends to algebraic spaces. However, note that the details of such an extension are not yet available in the literature.
}
Let $\cM_GY$, $\cM_GZ$ denote the hearts of the perverse t-structure on $D_GY$ and $D_GZ$ respectively.
Then the functor $p^*[d]\colon \cM_GY \to \cM_GZ$ is full, faithful, and its essential image is closed under taking subquotients.
Consequently, $p^*\colon \widetilde K_GY \to \widetilde K_GZ$ is injective.
Thus, using smooth base change, we may assume that there exist $G$-torsors $X\to \overline{X}$ and $Y\to \overline{Y}$. But then we have canonical equivalences $D_GX \simeq D\overline{X}$ and $D_GY\simeq D\overline{Y}$ which commute with the functors $f_*$ and $f_!$. Hence, the assertion reduces to the non-equivariant statement of Theorem \ref{main}.
\end{proof}

The above proof applies more generally to `Bernstein-Lunts stacks' (see \cite[\S18.7.4]{LMB}). However, I do not know of any examples of such stacks other than quotient stacks.

\subsection{Traces}\label{s6}In this section we consider the non-mixed case exclusively. Further, take sheaves to be sheaves of vector spaces over some fixed field.

If $G$ is a finite group, then $D_GX$ does coincide with the na\"ive derived category of equivariant sheaves \cite[\S8]{BL}. In particular, the derived category of the classifying stack $[\ast/G]$ is the derived category of finite dimensional $k[G]$-representations. 

For each $g\in G$, $\cM\in D_GX$, and $k\in \ZZ$, consider the traces
\[ \Tr(g, H^k(X; \cM)) \quad \mbox{and} \quad \Tr(g, H^k_c(X; \cM)) \]
of the $g$-action on cohomology and compactly supported cohomology with coefficients in $\cM$. Set
\begin{align*}
\chi_g(\cM) &:= \sum_k (-1)^k\Tr(g, H^k(X; \cM)), \\
\chi_{g,c}(\cM) &:=\sum_k (-1)^k \Tr(g, H^k_c(X; \cM)).
\end{align*}
\begin{cor}If $G$ is a finite group, then $\chi_g(\cM) = \chi_{g,c}(\cM)$.
\end{cor}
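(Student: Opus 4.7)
My plan is to deduce this directly from Theorem \ref{maineq} applied to the structure morphism $a\colon X \to \pt$, where $\pt$ is equipped with the trivial $G$-action. Then $a_*\cM, a_!\cM \in D_G(\pt)$, and under the equivalence $D_G(\pt) \simeq D^b(k[G]\mbox{-mod})$ recalled at the start of \S\ref{s6}, their cohomology sheaves are precisely $H^k(X;\cM)$ and $H^k_c(X;\cM)$, regarded as finite-dimensional $k[G]$-representations. Theorem \ref{maineq} gives $[a_*\cM] = [a_!\cM]$ in $\widetilde K_G(\pt)$.

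Next I would observe that in the non-mixed case $(1)$ is the identity, so the defining ideal of $\widetilde K_G(\pt)$ is zero, and hence $\widetilde K_G(\pt)$ is simply the Grothendieck group of the abelian category of finite-dimensional $k[G]$-modules, i.e., the representation ring of $G$ (additively). For each fixed $g\in G$, the assignment $V \mapsto \Tr(g, V)$ is additive on short exact sequences, and so descends to a well-defined group homomorphism $\Tr(g, -)\colon \widetilde K_G(\pt) \to k$.

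Applying this homomorphism to the identity $[a_*\cM] = [a_!\cM]$ and using that for a bounded complex of $k[G]$-modules the alternating sum of traces on the terms equals the alternating sum of traces on the cohomology, I obtain
\[
\sum_k (-1)^k \Tr(g, H^k(X;\cM)) = \sum_k (-1)^k \Tr(g, H^k_c(X;\cM)),
\]
which is exactly the desired equality $\chi_g(\cM) = \chi_{g,c}(\cM)$.

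There is no real obstacle here; the only thing that needs a moment's care is the well-definedness of $\Tr(g,-)$ on the Grothendieck group, which is immediate from additivity of trace on short exact sequences of finite-dimensional representations. Everything substantive has already been done in Theorem \ref{maineq}; this corollary is a formal consequence obtained by specializing to $Y = \pt$ and passing from virtual representations to their $g$-traces.
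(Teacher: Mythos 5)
Your proof is correct and is exactly the intended derivation: the paper states this as an immediate corollary of Theorem \ref{maineq} (applied to the equivariant map $X\to\pt$) and omits the argument, which is precisely the formal passage you spell out --- identifying $\widetilde K_G(\pt)$ with the representation ring and applying the trace homomorphism $\Tr(g,-)$ to $[a_*\cM]=[a_!\cM]$. No gaps; the one point deserving care (well-definedness of $\Tr(g,-)$ on the Grothendieck group and the compatibility of traces on a complex with traces on its cohomology) is handled correctly.
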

%
%\appendix
%\section{}\label{appA}
%Although well known, the distinguished triangle
%$\psi_f \to \psi_f \to i^*j_* \leadsto$
%does not seem to be explicitly stated in any of the usual references. However, it is easy to derive it from more familiar triangles. 
%
%Let $f\colon X \to \AA^1$ be a morphism of varieties. 
%Set $Z:= f^{-1}(0)$, and $U:= X-Z$. Write $i\colon Z\hookrightarrow X$, and $j\colon U\hookrightarrow X$ for the evident inclusions.
%So we have the usual nearby cycles setup
%\[\xymatrix{
%Z\ar[d] \ar[r]^-{i}& X\ar[d]_f     &\ar[l]_-{j}U\ar[d] \\
%\{0\}\ar[r]&     \AA^1 &\ar[l]\AA^1 - \{0\}
%}\]
%with all squares Cartesian. 
%Let
%\[ \psi_f\colon DU \to DZ \quad\mbox{and}\quad \phi_f\colon DX \to DZ\]
%denote the nearby cycles functor and vanishing cycles functor associated to $f$. As before, our $\psi_f$ and $\phi_f$ are normalized so as to preserve the perverse t-structure.
%Then we have distinguished triangles (for the non-mixed case see \cite{D}, for the mixed case see \cite[(2.4.4), (2.24.2)]{S}):
%\[ \psi_fj^* \mapright{\mathrm{can}} \phi_f \to i^* \leadsto \quad\mbox{and}\quad i^! \to \phi_f \mapright{\mathrm{var}} \psi_fj^*(-1) \leadsto\]
%Applying these triangles to $j_*$ we get the distinguished triangle
%\[ \psi_f \mapright{\mathrm{can}} \phi_fj_* \to i^*j_* \leadsto \]
%and the isomorphism
%\[\mathrm{var} \colon \phi_fj_* \mapright{\sim} \psi_f(-1).\]
%These yield the desired distinguished triangle.
%$\psi_f\to \psi_f(-1) \to i^*j_* \leadsto$

\end{document}